\documentclass[10pt,a4paper]{article}
\usepackage{graphicx}
\usepackage{amsfonts}
\usepackage{amsmath}
\usepackage{amssymb}
\usepackage{amsthm}
\usepackage{times}
\usepackage{bm}
\usepackage{natbib}

\newtheorem{thm}{Theorem}[section]

\begin{document}

\title{Another argument in favour of Wilcoxon's signed rank test}

\author{
Jonathan D. Rosenblatt\\
\texttt{jonathan.rosenblatt@weizmann.ac.il} \\\\ 
The Weizmann Institute of Science
\and
Yoav Benjamini\\
\texttt{ybenja@post.tau.ac.il}\\\\
Tel Aviv University}

\maketitle

\begin{abstract}

The Wilcoxon Signed Rank test is typically called upon when testing whether a symmetric  distribution has a specified centre and the Gaussianity is in question. As with all insurance policies it comes with a cost, even if small, in terms of power versus a t-test, when the distribution is indeed Gaussian.
In this note we further show that even when the distribution tested is Gaussian there need not be power loss at all, if the alternative is of a mixture type rather than a shift. The signed rank test may turn out to be more powerful than the t-test, and the supposedly conservative strategy, might actually be the more powerful one. Drug testing and functional magnetic imaging are two such scenarios.


\end{abstract}

\section{Introduction}
Consider a testing whether the centre of a symmetric distribution is at a specified location, say zero, using a single sample.
The two commonly used test statistics are the t-statistic, defined as $T =\bar{X}/ \left( S/\surd{n}\right)$, and the Wilcoxon's Signed-Rank statistic, defined as
$W^{+}=\sum_{i=1}^{n}R_{i}^{+}1_{ \left\{ X_{i}>0 \right\} }$,
where $R_{i}^{+}$ is the rank of the absolute value of the $i$th observation.
The signed-rank test statistic that does not rely on the Gaussian assumption for the distribution was historically assumed to suffer
from low power versus the t-statistic.
Numerical results, and the concept of asymptotic relative efficiency (abbreviated as
efficiency herein for simplicity) allowed to quantify this power loss.
They taught us that the signed-rank statistic is actually more powerful in many realistic scenarios \citep{lehmann_parametric_2009},
in particular when the test statistic is distributed with heavier-than-Gaussian-tails.
Of course, the t-statistic is optimal when the symmetric distribution is Gaussian,
though only 1.05 times more efficient than the signed-rank test.
All of these results consider the efficiency (and optimality) under alternatives
in which the distributional shape does not change, and only the location is shifted.
This type of alternatives need not be the true type of deviation from the null.


Say the true deviation from the null is actually of a mixture type. Is the researcher really loosing power when opting for the signed-rank test?
An example arises when localizing cognitive regions in whole-brain functional-magnetic-resonance-imaging
group studies; The researchers have to register all scanned brains
to a common spatial template. Since registration is imperfect, and
since function and anatomy are not one-to-one, any given brain location
might include both active and inactive individuals.
If one now defines an active location as non null mean activation
the mixture alternative is a more natural formulation than the shift alternative. This is indeed the approach adopted in  a recently submitted paper by Rosenblatt, Vink and Benjamini (2012), which motivated this note.
In that particular study, the signed-rank test statistic allowed
for more detections than the regularly-used t statistic: 11,817 out
of 27,401 brain locations were declared active using the former versus
11,037 using the latter. 

The comparison of these two test statistics for mixture alternatives
using the concept of relative asymptotic efficiency requires no new tools, but yields some surprising results:
A researcher reluctant to assume Gaussianity, opting for the signed-rank test, might actually be giving up less power than he thought. He might even be gaining some. In fact, the preferred statistic depends on the nuisance parameters in a way that prefers the signed-rank statistic in many realistic scenarios.

\section{Result}

Denote $\mathcal{N}\left(\mu,\sigma^{2}\right)$ to be the Gaussian
distribution with mean $\mu$ and variance $\sigma^{2}$ and consider
a simple random sample $X_{1},...,X_{n}$ from 
\begin{equation}
\left(1-\theta\right)\mathcal{N}\left(0,1\right)+\theta\mathcal{N}\left(\mu,\sigma^{2}\right)\label{eq:mixture_pdf}
\end{equation}
We now wish to compare the test statistics to detect $H_{1}:\theta\in(0,1]$
versus $H_{0}:\theta=0$ when $\mu,\sigma^{2}$ are assumed known. 

Let $n_{i}\left(\alpha,\pi,\theta\right)$ be the minimal sample size
for a test statistic $T_{i}$ to achieve power no smaller than $\pi$
with type I error rate no greater than $\alpha$. The Pitman asymptotic
relative efficiency between any two test statistics in our setup is defined as
\[
e_{1,2} = \underset{\theta\downarrow0}{\lim} \left\{ \frac{n_{2} \left( \alpha,\pi,\theta \right) }{n_{1} \left( \alpha,\pi,\theta \right) } \right\}
\]

The main result of this note is the following: 
\begin{thm}
Given a random sample from a population distributed as in eq.~\ref{eq:mixture_pdf},
	when testing the hypotheses $H_{1}:\theta\in(0,1]$ versus $H_{0}:\theta=0$,
	 the asymptotic relative efficiency of the Wilcoxon Signed-Rank versus
	the T-statistic statistic is: 
	\begin{equation}
	e_{Wilcoxon,T} =
	 \frac{9}{\mu^{2}} \left[ 2\Phi \left\{ \frac{\mu}{ \left( 1+\sigma^{2} \right) ^{1/2}} \right\} -1 \right] ^{2} \label{eq:main-result}
	\end{equation}
	\label{theorem1}
\end{thm}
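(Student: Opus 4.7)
The plan is to invoke the classical Pitman-efficacy framework, under which the Pitman ARE between two tests reduces to the squared ratio of their individual efficacies $c(T) = \lim_n \mu_n'(0)/[\sqrt{n}\,\sigma_n(0)]$, with $\mu_n(\theta)=E_\theta T_n$ and $\sigma_n^2(0)=\mathrm{Var}_0 T_n$. So I only need, separately for $T$ and for $W^+$, the mean-derivative at $\theta=0$ and the null variance; the theorem then follows by forming the ratio.

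For the t-statistic, I would first observe that under the mixture $E_\theta[X]=\theta\mu$ and $\mathrm{Var}_\theta(X)\to 1$ as $\theta\downarrow 0$. A short Taylor expansion of the large-$n$ approximation $T_n\approx \sqrt{n}\,\bar X/\sqrt{\mathrm{Var}_\theta(X)}$ then gives $\mu_T'(0)\sim \sqrt{n}\mu$ and $\sigma_T(0)\to 1$, so $c(T)=\mu$. This accounts for the $\mu^{2}$ in the denominator of (\ref{eq:main-result}).

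For $W^+$ the key device is the Hoeffding-type identity
\begin{equation*}
W^+ = \sum_{i} 1_{\{X_i>0\}} + \sum_{i<j} 1_{\{X_i+X_j>0\}},
\end{equation*}
which yields $E_\theta[W^+] = n\,p_1(\theta) + \binom{n}{2}\,p_2(\theta)$, with $p_1(\theta)=P_\theta(X>0)$ and $p_2(\theta)=P_\theta(X_1+X_2>0)$. Only $p_2'(0)$ is asymptotically relevant, since the single-sum contribution is $O(n)$ against the $O(n^2)$ pair sum. Splitting $(X_1,X_2)$ according to the three possible sampling patterns from the two-component mixture makes $X_1+X_2$ a three-component Gaussian mixture with explicit normal pieces, and differentiating at $\theta=0$ delivers exactly $p_2'(0)=2\Phi\{\mu/(1+\sigma^2)^{1/2}\}-1$, which is the bracketed factor in (\ref{eq:main-result}).

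The null variance $\mathrm{Var}_0(W^+)=n(n+1)(2n+1)/24\sim n^3/12$ is classical (signs are independent Bernoulli$(1/2)$ of magnitudes under $H_0$). Feeding these pieces into the efficacy of $W^+$ and squaring the ratio $c(W^+)/c(T)$ yields the stated identity. The only step that needs genuine care is the three-case decomposition of $p_2$ and the verification that the discarded $n\,p_1'(0)$ term is $O(n^{-1})$ smaller than the retained $\binom{n}{2}\,p_2'(0)$ term; everything else is routine bookkeeping.
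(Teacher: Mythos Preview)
Your proposal is correct and follows essentially the same route as the paper: both reduce the ARE to a squared efficacy ratio, handle $W^{+}$ through the Hoeffding/$U$-statistic decomposition so that only $p_{2}(\theta)=P_{\theta}(X_{1}+X_{2}>0)$ matters asymptotically, compute $p_{2}'(0)=2\Phi\{\mu/(1+\sigma^{2})^{1/2}\}-1$ by conditioning on which mixture component each of $X_{1},X_{2}$ comes from, and pair this with the classical null variance of the signed-rank statistic. The only difference is one of packaging: the paper works with the normalized $U$-statistic and explicitly verifies local asymptotic normality along $\theta_{n}=h/\surd{n}$ (H\'ajek projection plus a Lindeberg CLT, then van der Vaart's Theorem~14.19) before applying the efficacy formula, whereas you invoke the Pitman framework directly on the unnormalized $W^{+}$.
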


\begin{proof}
To prove this we follow the lines of Chapter 14 of \citet{vaart_asymptotic_2000}:
We first show that the two test statistics are locally asymptotically
normal when considering a series of hypotheses that approach the null
at rate $n^{1/2}$ as the sample grows. Namely $\theta_{n}=h/n^{1/2}$
for some $h>0$ . We then note that as $\theta\downarrow0$, the sample
sizes needed for fixed $\alpha$ and $\pi$ grow at the required 
rate for local asymptotic normality to hold. This greatly simplifies
the derivation of the efficiency.

\subsection*{The T-statistic:}
Local asymptotic normality as $n\rightarrow0$ under the series $\theta_{n}=h/n^{1/2}$ is immediate using the central limit theorem under the Lindberg condition: 
$n^{1/2}\left\{\bar{X}/s-\xi\left(\theta_{n}\right)\right\}{\rightarrow}\mathcal{N}\left(0,1\right)$  where $S$ is the root of the unbiased variance estimator and $\xi\left(\theta\right)=\theta\mu/\left\{\left(1-\theta\right)+\theta\sigma^{2}\right\}^{1/2}$
is the mean function. 
The Lindberg condition can be checked directly.

\subsection*{The Wilcoxon signed-rank statistic:}
As noted in page 164 in \citet{vaart_asymptotic_2000} the signed-rank
test is asymptotically equivalent to using the following U statistic:
$U=\binom{n}{2}^{-1}\sum_{i}\sum_{j>i}1_{\left\{ X_{i}+X_{j}>0\right\} }$
since $W^{+}=\binom{n}{2}U+\sum_{i}1_{\left\{ X_{i}>0\right\} }$
and the second term of the left hand side is an order smaller than
the first. Being a $U$ statistic with kernel $h_{2}\left(X_{1},X_{2}\right)=I_{\left\{ x_{1}+x_{2}>0\right\} }$,
implies, by the U-statistic theorem, asymptotic normality for all
fixed $\theta$: $n^{1/2}\left\{U_{n}-\xi\left(\theta\right)\right\}\rightarrow\mathcal{N}\left\{0,r^{2}\zeta_{1}\left(\theta\right)\right\}$
in distribution where $r$ is the order of the $U$ statistic 
,thus $r= 2$ for the signed-rank statistics, $F_{\theta}\left(X\right)$
is the cumulative distribution function of the mixture indexed by
$\theta$ and

\begin{eqnarray*}
\xi \left( \theta \right) & = 
& E_{\theta} \left\{ h_{2} \left( X_{1},X_{2} \right) \right\}= P \left( X_{1}+X_{2}>0 \right) \\ 
 &  & =
 \theta^{2}\Phi \left( \frac{\surd{2}\mu}{\sigma} \right) -\frac{1}{2}(\theta-1)
  \left[ 1-\theta+4\theta\Phi \left\{ \frac{\mu}{ \left( 1+\sigma^{2} \right) ^{1/2}} \right\} \right]
\end{eqnarray*}

\[
\zeta_{1} \left( \theta \right) = 
\text{var}_{\theta} \left[ E_{\theta} \left\{ h_{2} \left( X_{1,}X_{2} \right) \mid X_{1} \right\} \right] = \text{var}_{\theta} \left\{ 1-F_{\theta} \left( X \right) \right\} = \frac{1}{12}
\]

We thus have uniformly for all fixed $\theta$: 
$n^{1/2} \left\{ U_{n} - \xi \left( \theta \right) \right\} \rightarrow\mathcal{N} \left( 0,1/3 \right) $ 
in distribution and in particular for the null $\theta=0$. Local asymptotic normality
can be now established by using Hajek's projection and Lindberg's
central limit Theorem; The Hajek projection of 
$ U_{n} - \xi \left( \theta \right) $
is 
\[
\hat{U}_{n} = 
-\frac{2}{n}\sum_{i=1}^{n} \left[ F \left( -X_{i} \right) -E \left\{ F \left( -X_{i} \right) \right\} \right]
\]
Lindberg's condition trivially holds for a bounded random variable
so by the Central Limit Theorem $\surd{n}\hat{U}$ will converge in distribution to 
$ \mathcal{N} \left( 0,1/3 \right) $ as $\theta_{n} = h/\surd{n} $
approaches $\theta=0$. 
Now since var($U_n$)/var($\hat{U_n}$) converges in probability to 1, it follows from Theorem 11.2 in \citet{vaart_asymptotic_2000} that
$n^{1/2} \left\{ U-\xi \left( \theta \right) - \hat{U} \right\} $ converges
in probability to zero for all $\theta$ . Applying Slutskey's
lemma we get the desired local asymptotic normality: $n^{1/2}\left[U_{n}-\xi\left(\theta\right)\right]{\rightarrow}\mathcal{N} \left( 0,1/3 \right) $ in distribution as $\theta_{n}\rightarrow0$.

\subsection*{From local asymptotic normality to asymptotic relative efficiency:}

We will now use local asymptotic normality to compute the efficiency
between the test statistics.
The joint density of $n$ independent observations each having the mixture density in Eq. (~\ref{eq:mixture_pdf})

Let $P_{n}\left(\theta\right)$ be the probability measure of $n$ independent identically distributed
observations. 
Notice 
$P_{n} \left( \theta \right) $ is smooth in $\theta$ around
$\theta=0$ in the sense that the total variation distance between
$P_{n} \left( \theta \right) $ and $P_{n} \left( 0 \right) $ vanishes as
$\theta\downarrow0$. Also, for all $n$ the power function is non
decreasing in $\theta$. These conditions suffice for the efficiency
to equate the squared efficacy ratio \citep[Theorem 14.19]{vaart_asymptotic_2000}:
\[
e_{1,2} = 
 \left\{ \frac{\xi_{1}' \left( 0 \right) /\sigma_{1} \left( 0 \right) }{\xi_{2}' \left( 0 \right) /\sigma_{2} \left( 0 \right) } \right\} ^ {2}
\]
where $\xi_{i}'\left(0\right)$ is the derivative of $T_{i}$'s mean
function at the null, and $\sigma_{1}\left(0\right)$ is its standard
deviation. 
We now plug in the appropriate mean and variance for the
t and Wilcoxon statistics respectively:
$\xi_{T}' \left( 0 \right) = \mu$, 
$\sigma_{T} \left( 0 \right) = 1$, 
$\xi'_{Wilcoxon} \left( 0 \right) = 2\Phi \left\{ \mu/{ \left( 1+\sigma^{2} \right) ^{1/2}} \right\} - 1$ 
and  
$\sigma_{Wilcoxon} \left( 0 \right) = \left(1/3 \right) ^{1/2}$.
Equation~\ref{eq:main-result} follows.
\end{proof}

\section{Discussion}

As Theorem ~\ref{theorem1} indicates, 
none of the two tests considered is always superior in the mixture alternative setting, 
and the  superiority depends on the nuisance parameters 
as depicted in figure~\ref{fig:Dominance-Region}. This is not surprising and serves as a reminder that the nature of 
the deviation from the null hypothesis has very concrete implications on the test to be performed. 

We have considered only the t-statistic and the signed-rank statistic since (a) these are the most common tests employed when testing for deviations from 
a single, symmetric, centred population and (b) a generalized likelihood ratio test for mixture alternatives is impractically hard and does 
not dominate other simple test statistics \citep{delmas_2003}.

\begin{figure}[h]
	\includegraphics[width=20pc]{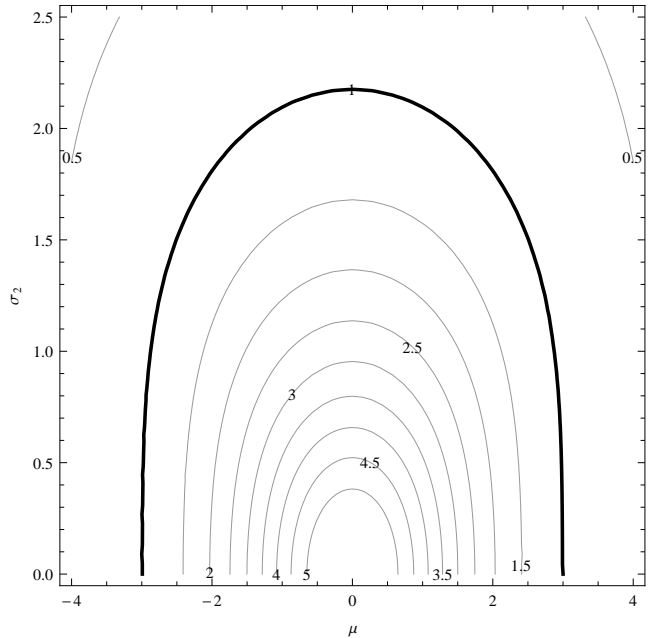}
	\caption{Asymptotic relative efficiency of the signed-rank test relative to the t-test as a function of 
	$\left(\mu,\sigma\right)$,
	when testing $\theta=0$ versus $\theta>0$ in 
	$\left(1-\theta\right)\mathcal{N}\left(0,1\right)+\theta\mathcal{N}\left(\mu,\sigma^{2}\right)$. }
	\label{fig:Dominance-Region}
\end{figure}

Pitman's asymptotic analysis is attractive since it often shows stability in finite and possibly small samples \citep{lehmann_parametric_2009}. We used a simulation study to analyze the finite sample power ratio of our two statistics. The asymptotic analysis in figure~\ref{fig:Dominance-Region} prefers the signed-rank test when the coefficient of variation is expected to be roughly smaller than 2/3. The finite sample analysis however,  tells a more complicated tale. Figures~\ref{fig:finite-sample-small-sd} and~\ref{fig:finite-sample-same-sd} demonstrate the power ratio between the two statistics as a function of the mixing proportion $\theta$, in samples of different sizes. Just like the asymptotic case, the dominance of the t statistic is not guaranteed. Its dominance region however, depends on both the nuisance parameters, including the mixing proportion,  and the sample size. Figure~\ref{fig:finite-sample-small-sd} depicts a scenario where the signed-rank statistic is considerably more powerful in a wide range of sample sizes and mixing proportions. The scenario in figure~\ref{fig:finite-sample-same-sd} is not as favourable for the signed-rank test, but does demonstrate that when dealing with a mixture of shifted Gaussians, the two statistics have remarkably similar performances with no single favourite.

\begin{figure}
	\textit{\includegraphics[width=20pc]{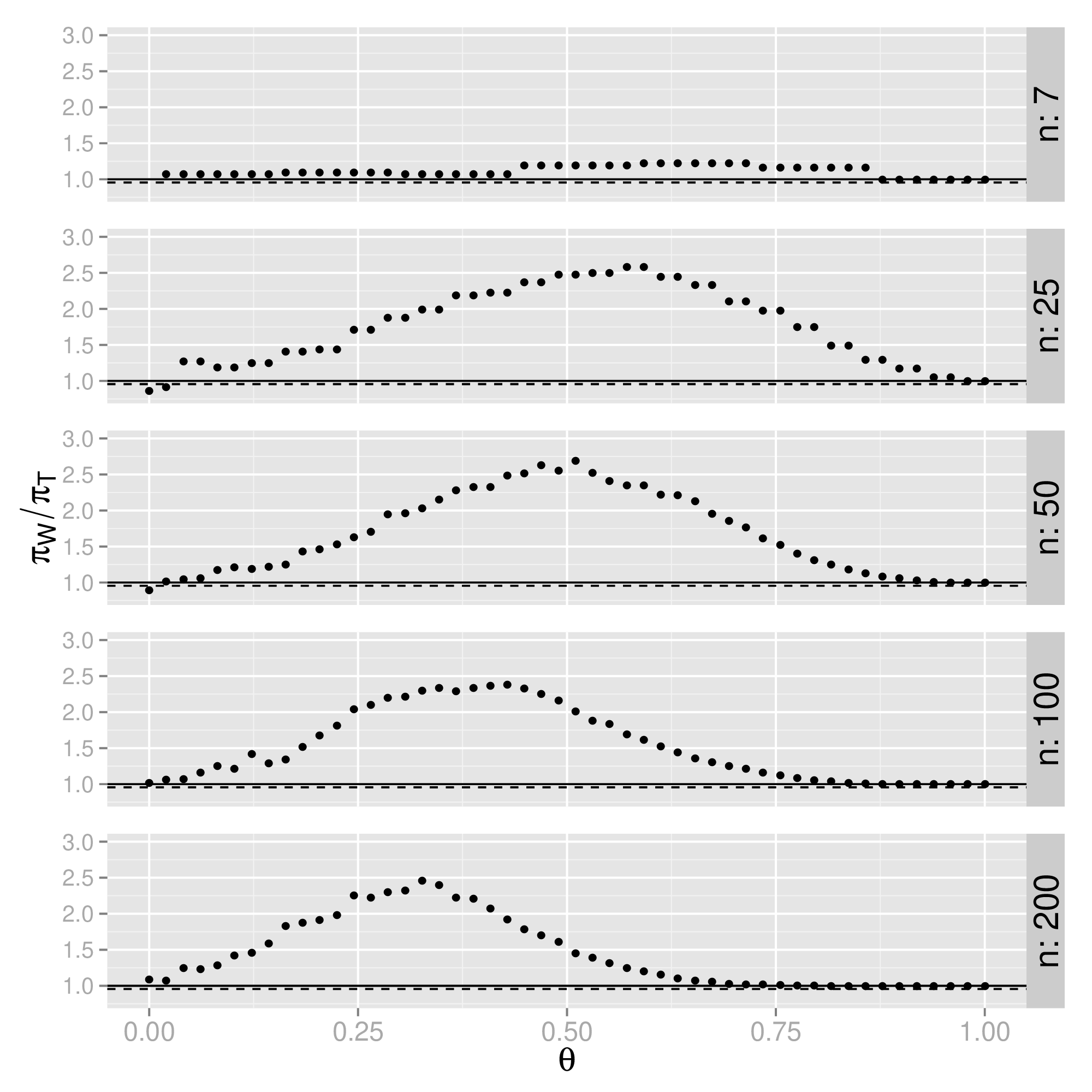}
	\caption{Power ratio of the signed-rank test (nominator) versus the t-test as a function of $\theta$ and the sample size,
	when testing $\theta=0$ versus $\theta>0$ in 
	$ \left( 1-\theta \right) \mathcal{N} \left( 0,1 \right) + \theta \mathcal{N} \left( 0.2, 0.1 \right) $. }
	\label{fig:finite-sample-small-sd}
}\end{figure}

\begin{figure}
	\includegraphics[width=20pc]{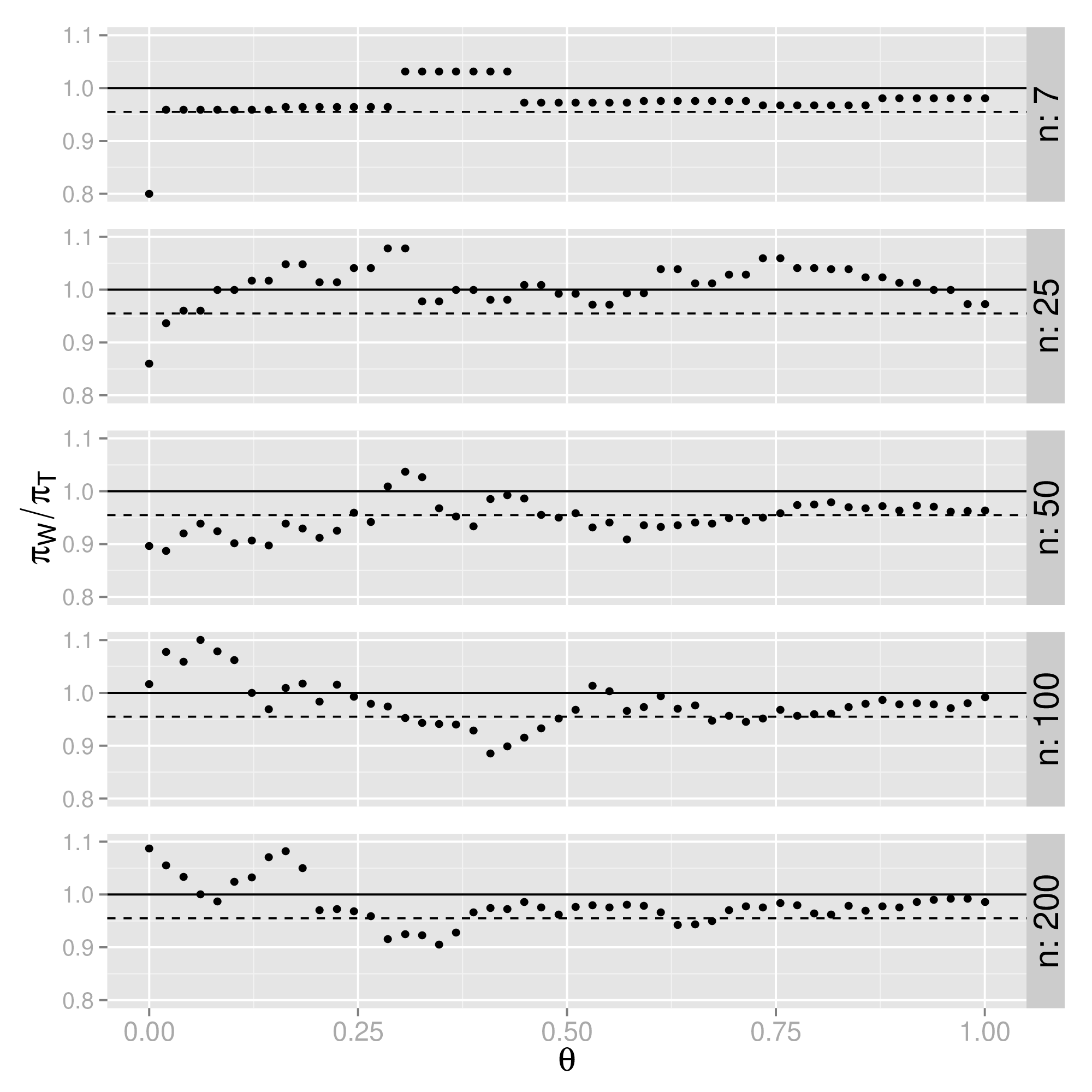}
	\caption{Power ratio of the signed-rank test (nominator) versus the t-test as a function of $\theta$ and the sample size,
	when testing $\theta=0$ versus $\theta>0$ in 
	$ \left( 1-\theta \right) \mathcal{N} \left( 0,1 \right) + \theta \mathcal{N} \left( 0.2, 1 \right) $. }
	\label{fig:finite-sample-same-sd}
\end{figure}

The results in this note have two practical implication on the choice of the test statistic when testing for deviations from a centred symmetric distribution. 
First, when the non parametric alternative has already been chosen, it suggests the power loss might not be too large. In particular when there is reason to believe the deviation from the null has the form of a mixture with a very concentrated non-centred component. 
Second, when the test statistic has not yet been chosen, it suggests that even under a Gaussian null, there is a possible power gain for the non-parametric statistic. In particular when the deviation from the null can be assumed to be a very asymmetric mixture. One which is obtained by mixing with a slightly-shifted and very centred component. Our suggested intuition
is that the t-test is more sensitive to the location shift
while the signed-rank statistic also captures the shape change occurring
as the null is mixed.

A possible application is the whole-brain functional magnetic resonance imaging  example. 
A much wider application can be found
in clinical trials. Consider the testing of a new drug: It is widely
acknowledged that the drug will affect only part of the population,
and yet it is common to test for a shift alternative. If one assumes
the drug has a shift effect on the affected sub-population, the mixture
alternative seems a more natural formulation. If the drug's expected
effect is small, the above result suggests the signed-rank test will
enjoy more power, even when the Gaussian-under-the-null assumption
holds.

Returning to the brain imaging example; After adapting Theorem~\ref{theorem1} to the appropriate mixture model, and using the nuisance parameter values estimated from that data, we find 
$e_{Wilcoxon,T} = 5.35$ 
suggesting that if the mixture alternative is of interest, the signed-rank statistic is indeed about five times more efficient.

\section{Acknowledgements}

Yoav Benjamini and Jonathan Rosenblatt were supported by a European
Research Council Advanced Investigator Grant (P.S.A.R.P.S.).

\bibliographystyle{plainnat}
\bibliography{Efficiency}

\end{document}